\tikzset{highlight/.style={rectangle,
fill=gray!50,
rounded corners = 0.5 mm, 
inner sep=1pt,
fit=#1}}
\def\Hom{{\rm Hom}}
\def\ind{{\rm ind}}
\def\ad{\mathop{\rm ad}}
\def\im{\mathop{\rm im}}
\def\phi{\varphi}
\def\g{\mathfrak g}
\def\h{\mathfrak h}
\def\F{\mathbb F}
\newtheorem{theorem}{Theorem}[section]
\newtheorem{lemma}[theorem]{Lemma}
\newtheorem{definition}[theorem]{Definition}
\theoremstyle{remark}
\providecommand{\keywords}[1]{\noindent{Keywords:} #1}
\providecommand{\classify}[1]{\noindent{Mathematics Subject
    Classification:} #1}
\title{On the Restricted Cohomology  of Twisted
Heisenberg Lie Superalgebras}
\author{
Yong Yang\\
College of Mathematics and System Science\\
Xinjiang University\\
Urumqi 830046, China\\
yangyong195888221@163.com
}
\date{}
\begin{document}
\maketitle

\begin{abstract}
Restricted twisted  Heisenberg Lie
  superalgebras are studied over an algebraically closed field $\F$ of
  characteristic $p>0$. We determine the restricted structures and use the ordinary 1- and 2-cohomology spaces with
  trivial coefficients to compute the restricted 2-cohomology
  spaces. As an application,  the corresponding 
  restricted one-dimensional central extensions are classified and described. 
\end{abstract}

{\footnotesize
\keywords{restricted Lie superalgebra;
  twisted Heisenberg superalgebra; restricted cohomology; restricted central extension}

\classify{17B30; 17B50; 17B56}}

\section{Introduction}

The theory of Lie superalgebras and Lie supergroups has many applications in various areas
of the modern mathematics and theoretical physics, for example, see \cite{A,BP,F,H,M}. Lie superalgebras over ﬁelds of characteristic $p>0$ are known as modular Lie superalgebras. 
The concept of a restricted Lie algebra is attributable to
Jacobson \cite{J}, who observed the importance of a $[p]$-operator having the properties
of the ``Frobenius map" $x\mapsto x^p$ of an associative algebra.
While Lie algebras over ﬁelds of characteristic zero are closely related to Lie and
algebraic groups, there is no similar correspondence for modular ones. 
The Lie algebras associated with algebraic groups are restrictable.
Besides that,
Milner' s theorem \cite{SF} states that every modular Lie algebra can be embedded into its universal $p$-envelope as a Lie subalgebra.
Therefore, restricted Lie algebras play important roles in the study of modular Lie algebras.
A restricted Lie superalgebra is a Lie superalgebra having a restricted Lie algebra as its even part and a restricted module as its odd part. 
Thus, restricted Lie superalgebras can be viewed as a generalization of restricted Lie algebras in the category of superalgebras.
In 1954, Hochschild \cite{H} established 
the restricted cohomology theory
of restricted Lie algebras and  associated the usual Chevalley-Eilenberg cohomology
 with the restricted cohomology by the canonical projection from its universal enveloping algebra to the restricted universal enveloping algebra. Then
 Evans and Fuchs constructed a cochain complex for restricted cohomology
 up to order 3 in \cite{EFu}. It has been proved to be an effective tool to compute restricted cohomology, for example, see \cite{EFi2,EFP,EFY}. Recently, the
 restricted cohomology theory of restricted Lie superalgebras was introduced
 by Bouarroudj and Ehret \cite{BE}.

In this paper, we concentrate on restricted twisted Heisenberg Lie superalgebras, defined as follows:

\begin{definition}\rm\cite{X}
For non-negative integers $m,n,t$, the \emph{twisted Heisenberg Lie superalgebra}
 $\mathfrak{h}^{\lambda,\kappa}_{m,n,t}$, parameterized by $\lambda=(\lambda_1,\ldots,\lambda_m)\in(\F^{\times})^{m}$ and $\kappa=(\kappa_1,\ldots,\kappa_n)\in(\F^{\times})^{n}$, is the $(2m+2,2n+t)$-dimensional vector superspace over a
 field $\F$ spanned by the elements, where the even basis elements first separated from the odd ones by a vertical bar:
 \[\{e_1,\ldots , e_{2m},e_{2m+1},e_{2m+2}\mid \omega_1,\ldots,\omega_{2n},\eta_1,\ldots,\eta_t\}\] with the non-vanishing Lie brackets
\begin{eqnarray*}
 &&[e_i,e_{m+i}]=[\omega_j,\omega_j]=-[\omega_{n+j},\omega_{n+j}]=[\eta_k,\eta_k]=e_{2m+1},\\
 &&[e_{2m+2},e_i]=\lambda_ie_{m+i},\ [e_{2m+2},e_{m+i}]=\lambda_ie_i,\\
&&[e_{2m+2},\omega_j]=\kappa_j\omega_{n+j},\ [e_{2m+2},\omega_{n+j}]=\kappa_j\omega_{j},
 \end{eqnarray*}
 for $1\le i\le m$, $1\le j\le n$ and $1\le k\le t$.
\end{definition}

The even parts of these Lie superalgebras are known as twisted Heisenberg Lie algebras, which are  certain one-dimensional extensions of the corresponding Heisenberg Lie algebras as
 shown in the following definition:
 
\begin{definition}\rm\cite{T}
For a non-negative integer $m$, the \emph{twisted Heisenberg Lie algebra}
 $\mathfrak{h}^{\lambda}_{m}$, parameterized by $\lambda=(\lambda_1,\ldots,\lambda_m)\in(\F^{\times})^{m}$, is the $(2m+2)$-dimensional vector space over a
 field $\F$ spanned by the elements
 \[\{e_1,\ldots , e_{2m},e_{2m+1},e_{2m+2}\}\] with the non-vanishing Lie brackets
 \[[e_i,e_{m+i}]=e_{2m+1},\ [e_{2m+2},e_i]=\lambda_ie_{m+i},\ [e_{2m+2},e_{m+i}]=\lambda_ie_i,\] for $1\le i\le m$.
\end{definition}
Due to the relation between twisted Heisenberg Lie algebras 
 and Lorentzian manifolds \cite{A,G}, there are many studies on these algebras \cite{A,T,Y1}. In \cite{Y1}, we determined all restricted structures on a twisted Heisenberg Lie algebra and characterized the restricted cohomology and restricted  extensions.
However, nothing is known about restricted twisted Heisenberg Lie superalgebras. These superalgebras are solvable since the derived superalgebras are two-step nilpotent with one-dimensional even centers. This class of nilpotent Lie superalgebras are called Heisenberg Lie superalgebras with even centers: 

\begin{definition}\rm\cite{R}
For non-negative integers $m$ and $n$, the \emph{Heisenberg Lie superalgebra with the even center}
$\mathfrak{h}_{m,n}$ is the $(2m+1,n)$-dimensional vector superspace over a
field $\F$ spanned by the elements,
where the even basis elements first separated from the odd
ones by a vertical bar: 
 \[\{e_1,\ldots , e_{2m},e_{2m+1} \mid \omega_1,\ldots,\omega_{n}\}\] with the non-vanishing Lie brackets
\[
[e_i,e_{m+i}]=[\omega_j,\omega_j]=e_{2m+1},\]
for $1\le i\le m$ and $1\le j\le n$.
\end{definition}

Generally, a two-step nilpotent Lie superalgebra with one-dimensional center is called a Heisenberg Lie superalgebra. In \cite{R}, all finite-dimensional
Heisenberg Lie superalgebras split precisely into two types according to the parity of the center and these superalgebras correspond to Heisenberg supergroups \cite{BP,CK}.
The ordinary and restricted cohomology  of Heisenberg Lie superalgebras with coefficients in the trivial module were computed in \cite{BaLi,Y2} both over fields of  characteristic zero and prime characteristic.

The paper is organized as follows.
In section 2, we recall the definitions of restricted Lie superalgebras and determine the restricted Lie superalgebra structures on the twisted Heisenberg Lie superalgebra
$\h^{\lambda,\kappa,\mu}_{m,n,t}$.
In secton 3, we recall Chevalley-Eilenberg cohomology for Lie superalgebras and the restricted Lie superalgebra cohomology for restricted Lie superalgebras. In section 4, we compute the ordinary 1- and 2-cohomology of $\h^{\lambda,\kappa,\mu}_{m,n,t}$ with coefficients in the trivial module by means of the Hochschild-Serre spectral sequence to the Heisenberg Lie superalgebra ideal. In section 5, we compute 
1- and 2-restricted cohomology of $\h^{\lambda,\kappa,\mu}_{m,n,t}$
with the Hochschild’s six-term exact sequence. In section 6, we classified and characterized the one-dimensional  restricted central
extensions.

\section{Restricted Lie superalgebra $\h^{\lambda,\kappa,\mu}_{m,n,t}$}

A \emph{restricted Lie algebra}  $\mathfrak{g}$ over a field $\F$ of positive characteristic $p$ is a Lie
algebra $\mathfrak{g}$ over $\F$ together with a map
$[p]:\g\to\g$, written $g\mapsto g^{[p]}$, such that for all
$a\in\F$, and all $g,h\in \mathfrak{g}$
\begin{itemize}
\item[(1)] $(a g)^{[p]}=a^{p}g^{[p]}$;
\item[(2)]
  $(g+h)^{[p]}=g^{[p]}+h^{[p]}+ \sum\limits_{i=1}^{p-1} s_i(g,h)$,
  where $is_i(g,h)$ is the coefficient of $t^{i-1}$ in the formal expression
  $(\mathrm{ad}(tg+h))^{p-1}(g)$; and
\item[(3)] $(\mathrm{ad}\ g)^{p}=\mathrm{ad}\ g^{[p]}$.
\end{itemize}

The map $[p]$ is called a \emph{$[p]$-operator} on
$\mathfrak{g}$.  Let $(\g,[p])$ be a restricted Lie algebra. A $\g$-module $M$ is called \emph{restricted} if, for $g\in\g$ and $m\in M$,
\[\underbrace{g\ldots g}_{p}\cdot m=g^{[p]}\cdot m.\]
\begin{definition}\rm\label{L}
A \emph{restricted Lie superalgebra} is a Lie superalgebra $\g=\g_{\bar{0}}\oplus \g_{\bar{1}}$ with a map $[p]:\g_{\bar{0}} \rightarrow \g_{\bar{0}}$ such that  $(\g_{\bar{0}},[p])$ is a restricted Lie algebra and $\g_{\bar{1}}$ is a restricted $\g_{\bar{0}}$-module with respect to the bracket.
\end{definition}
In \cite{Y1}, we studied the restricted twisted Heisenberg Lie algebras and proved the following theorem.
\begin{theorem}\label{Li}
For $m\ge 1$ and $\lambda\in(\F^{\times})^{m}$,
the twisted Heisenberg Lie algebra $\mathfrak{h}^{\lambda}_m$ is restricted if and only if $p>2$ and $\lambda^{p-1}_1=\cdots=\lambda^{p-1}_m$.
\end{theorem}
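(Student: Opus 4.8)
The plan is to invoke Jacobson's restrictability criterion (\cite{J,SF}): since axioms (1)--(2) are automatically arranged once a suitable $[p]$-operator exists, the Lie algebra $\mathfrak{h}^{\lambda}_m$ is restricted \emph{if and only if} $(\ad x)^p$ is an inner derivation for every $x$ in the basis $\{e_1,\ldots,e_{2m+2}\}$; and when this holds, one obtains a $[p]$-operator by setting $x^{[p]}$ to be any $y$ with $\ad y=(\ad x)^p$. The whole problem thus reduces to computing $p$-th powers of adjoint operators. Two structural facts drive the computation: the center of $\mathfrak{h}^{\lambda}_m$ is exactly $\F e_{2m+1}$, and $\ad e_{2m+2}$ preserves each plane $\spn\{e_i,e_{m+i}\}$, acting there by the matrix $M_i=\left(\begin{smallmatrix}0 & \lambda_i\\ \lambda_i & 0\end{smallmatrix}\right)$ while annihilating $e_{2m+1}$ and $e_{2m+2}$.

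First I would treat the Heisenberg generators $e_1,\ldots,e_{2m}$ and the central $e_{2m+1}$. A direct check gives $\ad e_i\colon e_{m+i}\mapsto e_{2m+1},\ e_{2m+2}\mapsto -\lambda_i e_{m+i}$ and $0$ otherwise, so $(\ad e_i)^2\colon e_{2m+2}\mapsto -\lambda_i e_{2m+1}$ and $(\ad e_i)^3=0$. Hence for $p\ge 3$ all these $p$-th powers vanish and are trivially inner. For $p=2$, however (using $m\ge 1$ to guarantee that $e_1$ exists), the nonzero operator $(\ad e_1)^2$ sends $e_{2m+2}$ into the center $\F e_{2m+1}$, whereas every inner derivation $\ad y$ sends $e_{2m+2}$ into $\spn\{e_j,e_{m+j}\}$ and never into $\F e_{2m+1}$; thus $(\ad e_1)^2$ is not inner and $\mathfrak{h}^{\lambda}_1$ fails to be restricted. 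This already forces $p>2$.

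Next I would analyze the twist generator $e_{2m+2}$ for odd $p$. Since $M_i^2=\lambda_i^2 I$, we get $M_i^{\,p}=\lambda_i^{\,p-1}M_i$, so $(\ad e_{2m+2})^p\colon e_i\mapsto \lambda_i^{\,p}e_{m+i},\ e_{m+i}\mapsto \lambda_i^{\,p}e_i$, killing $e_{2m+1}$ and $e_{2m+2}$. To test innerness I would compare this with a general inner derivation $\ad y$ for $y=\sum_k b_k e_k$, for which $\ad y(e_i)=b_{2m+2}\lambda_i e_{m+i}-b_{m+i}e_{2m+1}$ and $\ad y(e_{m+i})=b_{2m+2}\lambda_i e_i+b_i e_{2m+1}$. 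Matching the $e_{2m+1}$-components forces $b_i=b_{m+i}=0$, and matching the $e_{m+i}$-components forces the \emph{single} scalar $c:=b_{2m+2}$ to satisfy $c\lambda_i=\lambda_i^{\,p}$, i.e. $\lambda_i^{\,p-1}=c$, for all $i$ simultaneously. Therefore $(\ad e_{2m+2})^p$ is inner precisely when $\lambda_1^{\,p-1}=\cdots=\lambda_m^{\,p-1}$.

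Assembling the pieces yields the equivalence: if $\mathfrak{h}^{\lambda}_m$ is restricted then $e_1$ forces $p>2$ and $e_{2m+2}$ forces the equal-power condition; conversely, when $p>2$ and $\lambda_1^{\,p-1}=\cdots=\lambda_m^{\,p-1}=c$, all basis $p$-th powers are inner, so by Jacobson a $[p]$-structure exists, given for instance by $e_i^{[p]}=0$ for $1\le i\le 2m+1$ and $e_{2m+2}^{[p]}=c\,e_{2m+2}$. The hard part is the innerness analysis of paragraph three: one must rule out any exotic $y$ that could absorb unequal values $\lambda_i^{\,p-1}$, and the reason none exists is exactly that the center is one-dimensional ($\F e_{2m+1}$) and $\ad e_{2m+2}$ is, up to the blockwise scalars $\lambda_i$, the only derivation of the required shape, so a single overall scalar $c$ must serve every block at once.
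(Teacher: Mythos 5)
Your proof is correct. Note that the paper does not prove Theorem~\ref{Li} at all --- it imports the statement from \cite{Y1} --- so there is no in-text argument to compare against; your route via Jacobson's criterion (nilpotency of $\ad e_i$ disposing of $e_1,\dots,e_{2m+1}$ when $p>2$, the central component of $(\ad e_1)^2(e_{2m+2})$ excluding $p=2$, and the blockwise identity $(\ad e_{2m+2})^p=\lambda_i^{p-1}\,\ad e_{2m+2}$ on $\spn\{e_i,e_{m+i}\}$ forcing a common value of $\lambda_i^{p-1}$) is the standard one and is exactly consistent with how the paper immediately afterwards invokes Jacobson's theorem to parameterize all $[p]$-operators, your choice being the case $\mu=0$ of Eq.~(\ref{res}).
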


Suppose that $p > 2$ and $\lambda^{p-1}_1=\cdots=\lambda^{p-1}_m$. Denote by $|\lambda|=\lambda^{p-1}_1=\cdots=\lambda^{p-1}_m$.  
By Jacobson's theorem \cite{J}, a restricted Lie algebra $[p]$-operator on $\mathfrak{h}^{\lambda}_m$, parameterized by $\mu=(\mu_1,\dots, \mu_{2m+2})\in \F^{2m+2}$,
is given by
$$e_i^{[p]}=\mu_i e_{2m+1},\quad
e_{2m+2}^{[p]}=|\lambda|e_{2m+2}+\mu_{2m+2} e_{2m+1},$$
where $1\le i \le 2m+1$.
Moreover,  if $g=\sum^{2m+2}_{i=1} a_i e_i\in \mathfrak{h}_m^{\lambda}$,
then (see \cite{Y1})
\begin{align}\label{res}
      \begin{split}
  g^{[p]}&=a_{2m+2}^{p-1}|\lambda|\sum_{i=1}^{2m}a_ie_i+a_{2m+2}^{p}|\lambda|e_{2m+2}\\
  &+\left(\sum^{2m+2}_{i=1}a^{p}_i\mu_i+2^{-1}a_{2m+2}^{p-2}\sum_{i=1}^{m}\lambda_{i}^{p-2}(a_i^{2}-a_{m+i}^{2})\right) e_{2m+1}.
  \end{split}
\end{align}

\begin{theorem}
For $m,n,t\geq 1$, the twisted Heisenberg Lie superalgebra $\h^{\lambda,\kappa}_{m,n,t}$ is restricted if and only if $p>2$ and
$\lambda_{1}^{p-1}=\cdots=\lambda^{p-1}_m=\kappa^{p-1}_1=\cdots=\kappa^{p-1}_n$.
\end{theorem}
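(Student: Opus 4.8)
The plan is to read off both halves of Definition~\ref{L}: the superalgebra $\h^{\lambda,\kappa}_{m,n,t}$ is restricted exactly when its even part admits a $[p]$-operator making it a restricted Lie algebra \emph{and} its odd part is a restricted module under the adjoint action. First I would note that the even part $(\h^{\lambda,\kappa}_{m,n,t})_{\bar 0}$, spanned by $e_1,\dots,e_{2m+2}$ with brackets $[e_i,e_{m+i}]=e_{2m+1}$, $[e_{2m+2},e_i]=\lambda_ie_{m+i}$, $[e_{2m+2},e_{m+i}]=\lambda_ie_i$, is precisely the twisted Heisenberg Lie algebra $\mathfrak{h}^{\lambda}_{m}$. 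Hence Theorem~\ref{Li} applies verbatim: the even part is restricted iff $p>2$ and $\lambda_1^{p-1}=\cdots=\lambda_m^{p-1}$. This already yields the necessity of $p>2$ and of the equalities among the $\lambda_i^{p-1}$, and it pins down the admissible $[p]$-operators through formula~\eqref{res}.

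It then remains to decide when $\g_{\bar 1}=\spn\{\omega_1,\dots,\omega_{2n},\eta_1,\dots,\eta_t\}$ is a restricted $\g_{\bar 0}$-module, i.e.\ when $(\ad g)^p v=\ad(g^{[p]})v$ for all $g\in\g_{\bar 0}$ and $v\in\g_{\bar 1}$. The decisive simplification is that $e_{2m+1}$ is central and every $e_i$ with $1\le i\le 2m$ commutes with all odd elements, so $e_{2m+2}$ is the only even basis vector acting nontrivially on $\g_{\bar 1}$. Writing $g=\sum_{i=1}^{2m+2}a_ie_i$, this gives $\ad g|_{\g_{\bar 1}}=a_{2m+2}\,\ad e_{2m+2}|_{\g_{\bar 1}}$ and hence $(\ad g)^p|_{\g_{\bar 1}}=a_{2m+2}^{p}\,(\ad e_{2m+2})^p|_{\g_{\bar 1}}$. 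On the other hand, by~\eqref{res} the $e_{2m+2}$-component of $g^{[p]}$ is $a_{2m+2}^{p}|\lambda|\,e_{2m+2}$ and all remaining components act trivially on $\g_{\bar 1}$, so $\ad(g^{[p]})|_{\g_{\bar 1}}=a_{2m+2}^{p}|\lambda|\,\ad e_{2m+2}|_{\g_{\bar 1}}$. In particular the condition is independent of the free parameters $\mu$, and the whole module requirement collapses to the single operator identity $(\ad e_{2m+2})^p=|\lambda|\,\ad e_{2m+2}$ on $\g_{\bar 1}$.

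Finally I would evaluate that identity block by block. On each $\eta_k$ the operator $\ad e_{2m+2}$ vanishes, so the identity is automatic. On each two-dimensional block $\spn\{\omega_j,\omega_{n+j}\}$ the operator $\ad e_{2m+2}$ equals $\kappa_j$ times the swap $\omega_j\leftrightarrow\omega_{n+j}$, whose square is $\kappa_j^2$ times the identity; since $p$ is odd this gives $(\ad e_{2m+2})^p=\kappa_j^{p-1}\,\ad e_{2m+2}$ on that block. As $\kappa_j\neq 0$ makes $\ad e_{2m+2}$ nonzero there, matching with $|\lambda|\,\ad e_{2m+2}$ forces $\kappa_j^{p-1}=|\lambda|=\lambda_1^{p-1}$ for every $j$; conversely, under these equalities the identity holds on every block, so $\g_{\bar 1}$ is a restricted module and the superalgebra is restricted. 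Combining with the even-part conditions gives exactly $\lambda_1^{p-1}=\cdots=\lambda_m^{p-1}=\kappa_1^{p-1}=\cdots=\kappa_n^{p-1}$. The only genuinely delicate point is the block computation of the $p$-th power of $\ad e_{2m+2}$: it is the oddness of $p$ (ensured by $p>2$) that produces the exponent $p-1$ and thus the clean constraint $\kappa_j^{p-1}=|\lambda|$, with everything else reducing to routine linear algebra.
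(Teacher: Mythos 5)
Your proposal is correct and follows essentially the same route as the paper: reduce to Theorem~\ref{Li} for the even part, then use formula~(\ref{res}) to collapse the restricted-module condition on $\g_{\bar 1}$ to the single identity $(\ad e_{2m+2})^p=|\lambda|\,\ad e_{2m+2}$, which forces $\kappa_j^{p-1}=|\lambda|$. You merely spell out more explicitly than the paper why the coefficients $a_{2m+2}^p$ cancel and why the $\mu$-parameters are irrelevant, which is a welcome clarification rather than a divergence.
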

\begin{proof}
By definition \ref{L} and Theorem \ref{Li},  
$\h^{\lambda,\kappa}_{m,n,t}$ is restricted if and only if $p>2$, $\lambda^{p-1}_1=\cdots=\lambda^{p-1}_m$ and 
\begin{equation}\label{l}
[\underbrace{g,[\cdots[g,[g}_p,h]]\cdots]]=[g^{[p]},h] 
\end{equation}
 for $g\in (\h^{\lambda,\kappa}_{m,n,t})_{\bar{0}}$ and $h\in(\h^{\lambda,\kappa}_{m,n,t})_{\bar{1}}$. Set $g=\sum a_ie_i$ and $h=\sum b_j \omega_j+\sum c_k\eta_k$. 
 From Eq. (\ref{res}),  Eq.  (\ref{l}) holds if and only if 
\[(\ad e_{2m+2})^{p}\left(\sum b_j\omega_j\right)=|\lambda|\ad e_{2m+2}\left(\sum b_j\omega_j\right).\]
That is,
$\sum_{i=1}^{n}\kappa_i^p(b_i\omega_{n+i}+b_{n+i}\omega_i)=|\lambda|\sum_{i=1}^{n}\kappa_i(b_i\omega_{n+i}+b_{n+i}\omega_i)$.
From this, we get that $\kappa_i^{p-1}=|\lambda|$ for $1\le i\le n.$ The proof is complete.
\end{proof}

From now on, we assume $p > 2$ and $\lambda^{p-1}_1=\cdots=\lambda^{p-1}_m=\kappa^{p-1}_1=\cdots=\kappa^{p-1}_n=|\lambda|$.  
We let
$\mu=(\mu_1,\dots, \mu_{2m+2})$ and denote the corresponding restricted Lie superalgebra $\h^{\lambda,\kappa,\mu}_{m,n,t}$.


\section{Chevalley-Eilenberg and restricted Lie superalgebra cohomology}
In this section, we recall the theories of
the Chevalley-Eilenberg cochain complex and restricted cochain complex \cite{BE}.
 Everywhere in this
section, $\F$ denotes an algebraically closed field of characteristic
$p>2$ and $\g$ denotes a finite-dimensional Lie superalgebra over $\F$ with an
ordered basis $\{e_1,\ldots,e_m\mid e_{m+1},\ldots,e_{m+n}\}$.
Write $e^k$,  $e^{i,j}$ and $e^{u,v,w}$ for the dual vectors of the basis vectors $e_k\in\g$, $e_{i,j}=e_i\wedge e_j\in \wedge^{2}\g$ and $e_{u,v,w}=e_u\wedge e_v\wedge e_w\in\wedge^{3}\g$, respectively.  For a a superspace $V$, we denote by the super-dimension $\mathrm{sdim}\ V=(\mathrm{dim}\ V_{\bar{0}}, \mathrm{dim}\ V_{\bar{1}})$. If $V$ is a $\g$-module and $x\in\g$, we set $V^{x}=\{v\in V\mid x\cdot v=0\}$.
For $j\geq2$ and $g_1,\ldots, g_{j}\in\g$, we denote the $j$-fold bracket
$$[g_1, g_2, g_3,\ldots,g_j] = [[\ldots[[g_1, g_2], g_3],\ldots], g_j].
$$

\subsection{Chevalley-Eilenberg Lie superalgebra cohomology}

We only describe  the
Chevalley-Eilenberg cochain spaces $C^q(\g)=C^q(\g,\F)$
for $q=0,1,2,3$ and differentials $d^q:C^q(\g)\to C^{q+1}(\g)$
for $q=0,1,2$.
 For general definitions and theories, the readers are referred to  \cite{M}. Set
$C^0 (\g)=\F$ and $C^q (\g)= \wedge^q\g^*$ for $q=1,2,3$. 
The differentials $d^q:C^q (\g)\to C^{q+1}(\g)$ are defined for $\psi\in C^1 (\g)$,
$\phi\in C^2 (\g)$ and $g,h,f\in\g$ by\small
\begin{align*}
  d^0: C^0 (\g)\to C^1 (\g),  &\  d^0=0&\\
  d^1:C^1 (\g)\to C^2 (\g), &\   d^1(\psi)(g\wedge h)=\psi([g,h])&\\
  d^2:C^2 (\g)\to C^3 (\g), &\  d^2(\phi)(g\wedge h\wedge f)=\phi([g,h]\wedge f)-(-1)^{|f||h|}\phi([g,f]\wedge h)&\\
&\qquad \qquad\qquad  \qquad  \        +(-1)^{|g|(|h|+|f|)}\phi([h,f]\wedge g). &
\end{align*}
The maps $d^q$ satisfy $d^{q}d^{q-1}=0$ and 
$H^q(\g)=H^q(\g,\F)=\ker(d^q)/\im(d^{q-1})$ for $q=1,2$ . 

\subsection{Restricted Lie superalgebra cohomology}

In this subsection, we recall the definitions and results on the partial
restricted cochain complex given in \cite{BE} only for the case of trivial
coefficients. 

Given $\phi\in C^2(\g)$, a map $\omega:\g_{\bar{0}}\to\F$ is \emph{
  $\phi$-compatible} if for all $g,h\in\g_{\bar{0}}$ and all $a\in\F$
\\
  
$\omega(a g)=a^p \omega (g)$ and
\begin{equation}
  \label{starprop}
  \omega(g+h)=\omega(g)+\omega(h) + \sum_{\substack{g_i=\mbox{\rm\scriptsize $g$
        or $h$}\\ g_1=g, g_2=h}}
  \frac{1}{\#(g)}\phi([g_1,g_2,g_3,\dots,g_{p-1}]\wedge g_p)
\end{equation}
where $\#(g)$ is the number of factors $g_i$ equal to $g$.

For $\phi\in C^2(\g)$, we can assign the values of $\omega$ arbitrarily on a
basis for $\g_{\bar{0}}$ and use (1) to define
$\omega: \g_{\bar{0}} \to \F $ that is $\phi$-compatible.
Since the sum (\ref{starprop}) is symmetric in $g$ and $h$, both $\varphi$ and the bracket are bilinear, and the exterior algebra is associative, the map $\omega$ is well-defined and unique (c.f. \cite{EFi2,EFu}).
 In
particular, given $\phi$, we can define $\tilde\phi(e_i)=0$ for all $i$ and use
(\ref{starprop}) to determine a unique $\phi$-compatible map
$\widetilde\phi:\g_{\bar{0}}\to\F$.  Moreover, If $\phi_1,\phi_2\in C^2(\g)$
and $a\in\F$, then
$\widetilde{(a\phi_1+\phi_2)} = a\widetilde\phi_1 + \widetilde\phi_2$.
\\

If $\zeta\in C^3(\g)$, then a map $\eta:\g\times \g_{\bar{0}}\to\F$ is \emph{
  $\zeta$-compatible} if for all $a\in\F$ and all $g\in\g,h,h_1,h_2\in\g_{\bar{0}}$,
$\eta(\cdot,h)$ is linear in the first coordinate,
$\eta(g,a h)=a^p\eta(g,h)$ and
\begin{align*}
  \eta(g,h_1+h_2) &=
                    \eta(g,h_1)+\eta(g,h_2)-\nonumber \\
                  & \sum_{\substack{l_1,\dots,l_p=1 {\rm or} 2\\ l_1=1,
  l_2=2}}\frac{1}{\#\{l_i=1\}}\zeta (g\wedge
  [h_{l_1},\cdots,h_{l_{p-1}}]\wedge h_{l_{p}}).
\end{align*}

The restricted cochain spaces are defined as $C^0_*(\g)=C^0 (\g)$,
$C^1_*(\g)=C^1 (\g)$,
\[C^2_*(\g)=\{(\phi,\omega)\ |\ \phi\in C^2 (\g), \omega:\g_{\bar{0}}\to\F\
  \mbox{\rm is $\phi$-compatible}\}\]
\[C^3_*(\g)=\{(\zeta,\eta)\ |\ \zeta\in C^3 (\g),
  \eta:\g\times\g_{\bar{0}}\to\F\ \mbox{\rm is $\zeta$-compatible}\}.\] 

We define
\begin{align*}
\Hom_{\rm Fr}(\g_{\bar{0}},\F) =& \{f:\g_{\bar{0}}\to\F\ |\ f(a x+b
  y)=a^pf(x)+b^pf(y)\ \mathrm{for\ all}\ a,b\in\F\\
&\ \mathrm{and}\  x,y\in \g_{\bar{0}}\}
\end{align*}
 to be
the space of \emph{ Frobenius homomorphisms} from $\g_{\bar{0}}$ to $\F$.
For $1\le i\le m$, define $\overline e^i:\g_{\bar{0}}\to\F$ by
$\overline e^i \left(\sum_{j=1}^m a_j e_j\right ) = a_i^p.$ The set
$\{\overline e^i\ |\ 1\le i\le m\}$ is a basis for the space of Frobenius homomorphisms
$\Hom_{\rm Fr}(\g_{\bar{0}},\F)$. 

Note that a map
$\omega:\g_{\bar{0}}\to\F$ is $0$-compatible if and only if
$\omega\in \Hom_{\rm Fr}(\g_{\bar{0}},\F)$, then we have the exact sequence
\begin{align*}
  0 \longrightarrow   \Hom_{\rm Fr}(\g_{\bar{0}},\F)\stackrel{\iota}{\longrightarrow}  C^2_*(\g) \stackrel{\pi}{\longrightarrow} C^2(\g)  \longrightarrow 0.
\end{align*}
Therefore
$\dim C^2_*(\g) =\dim C^2(\g)+\dim\g_{\bar{0}}$ and
\[
  \{(e^{i,j},\widetilde{e^{i,j}})\ |\ 1\le i<j \le m\} \cup \{(e^{i,j},\widetilde{e^{i,j}})\ |\ 1\le i\le m, m+1\leq j \le m+n\}\]
\[\cup
\{(e^{i,j},\widetilde{e^{i,j}})\ |\ m+1\le i\le j \le m+n\}
\cup \{(0,\overline e^i)\ |\ 1\le i\le m\}\]
is a basis for $C^2_*(\g)$. We will use this basis in all computations that
follow.

Define $d_*^0=d^0$. For $\psi\in C^1_*(\g)$, define the map
$\ind^1(\psi):\g_{\bar{0}}\to\F$ by
 \[\ind^1(\psi)(g)=\psi(g^{[p]}).\] 
The map
$\ind^1(\psi)$ is $d^1(\psi)$-compatible for all $\psi\in C^1_*(\g)$,
and the differential $d^1_*:C^1_*(\g)\to C^2_*(\g)$ is defined by
\begin{equation}
  d^1_*(\psi) = (d^1(\psi),\ind^1(\psi)).
\end{equation}
By definition,
\begin{equation}\label{h1}
 H^1_*(\g)=(\g/([\g,\g]+\langle \g_{\bar{0}}^{[p]}\rangle_\F))^*.
\end{equation}

For $(\phi,\omega)\in C^2_*(\g)$, define the map
$\ind^2(\phi,\omega):\g \times\g_{\bar{0}} \to\F$ by the formula
\[\ind^2(\phi,\omega)(g,h)=\phi(g\wedge h^{[p]})-\varphi([g,\underbrace{h,\ldots,h}_{p-1}],h).\]
The map $\ind^2(\phi,\omega)$ is $d^2(\phi)$-compatible for all
$\phi\in C^2(\g)$, and the differential $d^2_*:C^2_*(\g)\to C^3_*(\g)$
is defined by
\begin{equation}
  d^2_*(\phi,\omega) =
  (d^2(\phi),\ind^2(\phi,\omega)). 
\end{equation}
Note that  if $\omega_1$ and $\omega_2$ are
both $\phi$-compatible, then
$\ind^2(\phi, \omega_1)=\ind^2(\phi, \omega_2)$.

These maps $d_*^q$ satisfy $d_*^{q}d_*^{q-1}=0$ and we define
\[H_*^q(\g)=H_*^q(\g,\F)=\ker(d_*^q)/\im(d_*^{q-1}) \]
for $q=1,2$.

\section{The cohomology $H^1(\h^{\lambda,\kappa,\mu}_{m,n,t})$ and $H^2(\h^{\lambda,\kappa,\mu}_{m,n,t})$}

For a Lie superalgebra $\g$,
a useful tool for computing the cohomology is the Hochschild-Serre spectral
sequence relative to an ideal $I\triangleleft\g$ and the second term is $E_{2}^{r,s}=H^{r}(\g/I,H^{s}(I))\Longrightarrow H^{r+s}(\g)$ \cite{M}.
For $\g=\h^{\lambda,\kappa,\mu}_{m,n,t}$,
we consider the Heisenberg Lie superalgebra $\h_{m,2n+t}=\mathrm{span}\{e_1,\ldots,e_{2m+1}\mid \omega_1,\ldots,\omega_{2n},\eta_1,\ldots,\eta_t\}$ as an ideal of $\h^{\lambda,\kappa,\mu}_{m,n,t}$.
Notice that 
\[
E_{2}^{r,s}=H^{r}(\h^{\lambda,\kappa,\mu}_{m,n,t}/\h_{m,2n+t},H^{s}(\h_{m,2n+t}))=
\left\{
                    \begin{array}{ll}
                      H^r(\F e_{2m+2},H^s(\h_{m,2n+t})), & \hbox{$r=0,1$;} \\
                      0, & \hbox{otherwise.}
                    \end{array}
                  \right.
\]
Moreover,
$E_{\infty}^{r,s}=E_{2}^{r,s}$, $r=0$ or 1; $E_{\infty}^{r,s}=0$, otherwise.

\begin{lemma}\label{SE}
Suppose that $k\geq0$. Then
\[
H^k(\h^{\lambda,\kappa,\mu}_{m,n,t}) =H^k(\h_{m,2n+t})^{e_{2m+2}}\oplus \left(\F e^{2m+2}\wedge \frac{H^{k-1}(\h_{m,2n+t})}{e_{2m+2}\cdot H^{k-1}(\h_{m,2n+t})}\right).
\]
\end{lemma}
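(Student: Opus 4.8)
The plan is to read the decomposition directly off the two-column Hochschild--Serre spectral sequence already recorded above. Since $\g/\h_{m,2n+t}\cong\F e_{2m+2}$ is one-dimensional and hence abelian, the cohomology of a one-dimensional Lie algebra with coefficients in a module $M$ collapses to the invariants in degree $0$ and the coinvariants in degree $1$. I would therefore first identify
\[
E_2^{0,s}=H^s(\h_{m,2n+t})^{e_{2m+2}},\qquad E_2^{1,s}=H^s(\h_{m,2n+t})/\,e_{2m+2}\cdot H^s(\h_{m,2n+t}),
\]
where $e_{2m+2}$ acts on $H^s(\h_{m,2n+t})$ through the induced Lie-derivative action. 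Because only the columns $r=0,1$ survive, every higher differential maps into a zero column, so $E_\infty=E_2$ as noted, and the two-step filtration on $H^k(\g)$ yields the short exact sequence
\[
0\longrightarrow E_\infty^{1,k-1}\longrightarrow H^k(\h^{\lambda,\kappa,\mu}_{m,n,t})\longrightarrow E_\infty^{0,k}\longrightarrow 0.
\]

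To upgrade this extension to the asserted direct sum, I would exhibit the two summands concretely at the cochain level. Decomposing $\g^*=\h_{m,2n+t}^*\oplus\F e^{2m+2}$, every cochain splits uniquely as $\psi=\alpha+e^{2m+2}\wedge\beta$ with $\alpha\in\wedge^k\h_{m,2n+t}^*$ and $\beta\in\wedge^{k-1}\h_{m,2n+t}^*$. The key observation is that $d^1(e^{2m+2})=0$, since $[\g,\g]\subseteq\h_{m,2n+t}$ and so $e^{2m+2}$ annihilates every bracket; consequently $(C^\bullet(\g),d)$ is the mapping cone of the chain map $e_{2m+2}\cdot(-)$ on $(C^\bullet(\h_{m,2n+t}),d)$, with $d(\alpha+e^{2m+2}\wedge\beta)=d\alpha\mp e^{2m+2}\wedge(e_{2m+2}\cdot\alpha+d\beta)$ up to the super-signs dictated by the $d^2$ formula.

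From this description the two halves become transparent. If $\beta$ is a $(k-1)$-cocycle of $\h_{m,2n+t}$, then $e^{2m+2}\wedge\beta$ is a $k$-cocycle of $\g$, and it is a coboundary precisely when $[\beta]\in e_{2m+2}\cdot H^{k-1}(\h_{m,2n+t})$; this realizes the coinvariants summand as $\F e^{2m+2}\wedge\bigl(H^{k-1}(\h_{m,2n+t})/e_{2m+2}\cdot H^{k-1}(\h_{m,2n+t})\bigr)$ inside $H^k(\g)$, and these classes are exactly the kernel of restriction to $\h_{m,2n+t}$. Conversely, restriction carries $H^k(\g)$ onto $H^k(\h_{m,2n+t})^{e_{2m+2}}$: given an invariant $[\alpha]$ one has $e_{2m+2}\cdot\alpha=d\gamma$ for some $\gamma$, and then $\alpha+e^{2m+2}\wedge\gamma$ is a $\g$-cocycle restricting to $\alpha$. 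These maps split the short exact sequence and produce the claimed direct sum.

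The main obstacle is the bookkeeping of super-signs: verifying that $e^{2m+2}\wedge(-)$ sends cocycles to cocycles and coboundaries to coboundaries, and that the correction $\alpha+e^{2m+2}\wedge\gamma$ is genuinely closed, requires tracking the parity factors $(-1)^{|f||h|}$ from $d^2$ (and its higher-degree analogues) through the wedge decomposition. Since $e_{2m+2}$ is even, wedging with $e^{2m+2}$ contributes no parity sign of its own, which keeps the cone identification clean; the only delicate point is commuting $d$ past the cohomological-degree-one factor $e^{2m+2}$.
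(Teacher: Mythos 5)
Your proof is correct and follows essentially the same route as the paper: the two-column Hochschild--Serre spectral sequence relative to the ideal $\h_{m,2n+t}$ degenerates at $E_2$, with $E_2^{0,k}$ and $E_2^{1,k-1}$ identified as the invariants $H^k(\h_{m,2n+t})^{e_{2m+2}}$ and the coinvariants $\F e^{2m+2}\wedge\bigl(H^{k-1}(\h_{m,2n+t})/e_{2m+2}\cdot H^{k-1}(\h_{m,2n+t})\bigr)$. The paper simply writes $H^k(\g)=\bigoplus_{r+s=k}E_\infty^{r,s}$ and stops; your additional cochain-level mapping-cone argument exhibiting explicit splittings of the filtration is a refinement (automatic over a field at the level of vector spaces) rather than a different method.
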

\begin{proof}
For $k\geq 0$, we have
\[
H^k(\h^{\lambda,\kappa,\mu}_{m,n,t}) =\bigoplus_{r+s=k}E^{r,s}_{\infty}
=H^0(\F e_{2m+2},H^k(\h_{m,2n+t}))\oplus H^1(\F e_{2m+2},H^{k-1}(\h_{m,2n+t})).\]
By the definitions of low cohomology \cite{M}, we have
\begin{eqnarray*}
&&H^0(\F e_{2m+2},H^k(\h_{m,2n+t}))=
H^k(\h_{m,2n+t})^{e_{2m+2}},\\
&&H^1(\F e_{2m+2},H^{k-1}(\h_{m,2n+t}))=\F e^{2m+2}\wedge \frac{H^{k-1}(\h_{m,2n+t})}{e_{2m+2}\cdot H^{k-1}(\h_{m,2n+t})}.
\end{eqnarray*}
Thus, we obtain that
\[
H^k(\h^{\lambda,\kappa,\mu}_{m,n,t}) =H^k(\h_{m,2n+t})^{e_{2m+2}}\oplus \left(\F e^{2m+2}\wedge \frac{H^{k-1}(\h_{m,2n+t})}{e_{2m+2}\cdot H^{k-1}(\h_{m,2n+t})}\right).
\]
The proof is complete.
\end{proof}
In order to compute the cohomology of $\h^{\lambda,\kappa,\mu}_{m,n,t}$, we here recall some results on the cohomology of the Heisenberg Lie superalgebra $\h_{m,2n+t}$ in \cite{BaLi}. It is easy to check these results hold for both $\mathrm{char}\ \F=0$ and $\mathrm{char}\ \F=p>2$.
\begin{lemma}\label{se}
Suppose that $\h_{m,2n+t}$ is the Heisenberg Lie superalgebra spanned by $\{e_1,\ldots,e_{2m+1}\mid \omega_1,\ldots,\omega_{2n},\eta_1,\ldots,\eta_t\}$. Then

(1) the space $H^1(\h_{m,2n+t})$ is spanned by  the classes of the cocycles
\[\{e^1,\ldots,e^{2m}\mid \omega^1,\ldots,\omega^{2n},\eta^1,\ldots,\eta^t\},\]

(2) the space $H^2(\h_{m,2n+t})$ is spanned by  the classes of the cocycles
 \[\{e^{i,j}
   \mid 1\le i< j\le 2m\}
\cup \{e^{i}\wedge \omega^j,e^{i}\wedge \eta^k, \omega^j\wedge \eta^k \mid 1\le i\le 2m, 1\le  j\le 2n, 1\le  k\le t\}\]
\[\cup \{(\omega^{i,j},\eta^{k,l}\mid 1\le i\le j\le 2n, 1\le  k< l\le t\}
  \cup \{\eta^{i,i}\mid 1\le i\le t-1\}\]
\end{lemma}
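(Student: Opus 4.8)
The plan is to read off $H^1$ and $H^2 = \ker d^2/\im d^1$ directly from the Chevalley--Eilenberg complex, exploiting that $\h_{m,2n+t}$ is a central extension whose bracket takes values in the one-dimensional subspace $Z=\F e_{2m+1}=[\h_{m,2n+t},\h_{m,2n+t}]$, which is also the center (these are essentially the computations of \cite{BaLi}). For part (1), since $d^0=0$ we have $H^1=\ker d^1$, and $d^1(\psi)(g\wedge h)=\psi([g,h])$ vanishes for all $g,h$ exactly when $\psi$ annihilates $[\h_{m,2n+t},\h_{m,2n+t}]=\F e_{2m+1}$. Hence $H^1=(\h_{m,2n+t}/\F e_{2m+1})^{*}$ is spanned by the duals of every basis vector except $e^{2m+1}$, which is precisely the asserted list $\{e^1,\dots,e^{2m}\mid\omega^1,\dots,\omega^{2n},\eta^1,\dots,\eta^t\}$.

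For part (2) the first key step is to note that $d^2\phi$ depends only on the restriction of $\phi$ to $Z\wedge\h_{m,2n+t}$. Indeed each bracket is a scalar multiple $[g,h]=B(g,h)e_{2m+1}$, where $B$ is the induced bracket form (symplectic on the even part, symmetric on the odd part coming from $[\omega_j,\omega_j]=[\eta_k,\eta_k]=e_{2m+1}$), so every term of $d^2\phi(g\wedge h\wedge f)$ has the shape $\pm B(\cdot,\cdot)\,\phi(e_{2m+1}\wedge\cdot)$. Writing a general cochain as $\phi=\psi+e^{2m+1}\wedge\alpha$ with $\psi$ free of $e^{2m+1}$ and $\alpha\in\langle e^1,\dots,e^{2m},\omega^1,\dots,\omega^{2n},\eta^1,\dots,\eta^t\rangle$, this shows $d^2\psi=0$ automatically, so every $2$-cochain without an $e^{2m+1}$-factor is a cocycle. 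It then remains to prove that $d^2\phi=0$ forces $\alpha=0$: the cocycle condition on a triple reads (up to Koszul signs) $B(g,h)\alpha(f)\mp B(g,f)\alpha(h)\pm B(h,f)\alpha(g)=0$, and I would extract $\alpha=0$ one basis direction at a time. The odd self-pairing $g=h=\omega_1$ (using $B(\omega_1,\omega_1)=1$ and $B(\omega_1,e_a)=0$) kills $\alpha$ on every even vector, while the even symplectic pair $g=e_1,h=e_{m+1}$ (using $B(e_1,e_{m+1})=1$ and $B(e_1,f)=B(e_{m+1},f)=0$ for odd $f$) kills $\alpha$ on every odd vector; since $m,n,t\ge1$ and $p>2$, these cover all directions and give $\alpha=0$. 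Thus $\ker d^2$ is exactly the span of all basis $2$-cochains with no $e^{2m+1}$-factor.

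The final step is to quotient by $\im d^1=\F\beta$, where $\beta:=d^1(e^{2m+1})$ satisfies $\beta(g\wedge h)=B(g,h)$, so $\beta=\sum_{i=1}^{m}e^{i,m+i}+\sum_{j=1}^{2n}\omega^{j,j}+\sum_{k=1}^{t}\eta^{k,k}$ up to the diagonal normalizations (nonzero since $p>2$). Because the $\eta^{t,t}$-coefficient of $\beta$ is nonzero, the single cohomology relation $\beta\equiv0$ can be solved for $\eta^{t,t}$, so deleting $\eta^{t,t}$ from the basis of $\ker d^2$ yields a basis of $H^2=\ker d^2/\F\beta$. This is exactly the list in (2): all $e^{i,j}$ with $i<j\le2m$, all even-odd forms $e^i\wedge\omega^j,e^i\wedge\eta^k$, the mixed odd-odd forms $\omega^j\wedge\eta^k$, all $\omega^{i,j}$ with $i\le j\le2n$, the off-diagonal $\eta^{k,l}$ with $k<l$, and only the diagonal $\eta^{i,i}$ with $i\le t-1$.

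I anticipate the main obstacle to be the sign bookkeeping in the super-exterior algebra when verifying $d^2\phi=0\Rightarrow\alpha=0$: because the odd self-brackets make $B$ symmetric rather than skew on $(\h_{m,2n+t})_{\bar1}$, the factors $(-1)^{|f||h|}$ and $(-1)^{|g|(|h|+|f|)}$ must be tracked carefully through each triple-type (even-even-even, even-even-odd, odd-odd-even), and one must confirm that each coordinate of $\alpha$ is actually reached by some admissible triple when $m,n,t\ge1$. Everything else is routine linear algebra over $\F$, and the role of $p>2$ is only to keep the diagonal self-pairings and the $\eta^{t,t}$-coefficient invertible.
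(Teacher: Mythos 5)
Your argument is correct, and it is worth noting that the paper itself does not prove Lemma \ref{se}: the result is quoted from \cite{BaLi} with only the remark that the computation carries over to characteristic $p>2$. Your proposal therefore supplies a genuinely self-contained alternative, and its mechanism is sound: since every bracket has the form $[g,h]=B(g,h)e_{2m+1}$, the differential $d^2\phi$ depends only on the component $e^{2m+1}\wedge\alpha$ of $\phi$, so all $2$-cochains free of $e^{2m+1}$ are cocycles; the test triples $\omega_1\wedge\omega_1\wedge e_a$ (legitimate in the super-exterior algebra because $\omega_1$ is odd) and $e_1\wedge e_{m+1}\wedge f$ do force $\alpha=0$; and $\im d^1=\F\,d^1(e^{2m+1})$ is one-dimensional with invertible $\eta^{t,t}$-coefficient, so exactly one diagonal class is deleted, reproducing the stated list and the correct dimension count. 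Two minor caveats: your choice of test triples needs $m\ge 1$ and at least one odd generator with nonvanishing self-bracket, which hold under the paper's standing hypotheses $m,n,t\ge 1$ but should be flagged if the lemma is read for degenerate parameters; and when the lemma is applied to the ideal $\h_{m,2n+t}$ sitting inside $\h^{\lambda,\kappa,\mu}_{m,n,t}$, the odd quadratic form carries signs ($[\omega_{n+j},\omega_{n+j}]=-e_{2m+1}$), which only changes the nonzero diagonal coefficients of $\beta$ and does not affect the argument. Compared with the paper's citation, your route has the advantage of making explicit exactly where $p>2$ enters, namely in the invertibility of the diagonal self-pairings, which is precisely the point the paper asserts without verification.
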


For a proposition $P$, we put $\delta_{P}=1$ when $P$ is true,
 and $\delta_{P}=0$, otherwise. Lemmas \ref{SE} and \ref{se} give the following Theorem.

\begin{theorem}\label{H2}
Suppose that $\h^{\lambda,\kappa,\mu}_{m,n,t}$ is the twisted Heisenberg Lie superalgebra spanned by $\{e_1,\ldots,e_{2m+2}\mid \omega_1,\ldots,\omega_{2n},\eta_1,\ldots,\eta_t\}$. Then

(1) the space $H^1(\h^{\lambda,\kappa,\mu}_{m,n,t})$ is spanned by  the classes of the cocycles
\[\{ e^{2m+2}\mid \eta^1,\ldots,\eta^t\},\]

(2)the space $H^2(\h^{\lambda,\kappa,\mu}_{m,n,t})$ is spanned by  the classes of the cocycles
$\bigcup^{5}_{i=1} A_i,$
where
\begin{eqnarray*}
&&A_1=\{\delta_{\lambda_i=\pm\lambda_j} (e^{i,j} -\lambda_i\lambda_j^{-1}e^{m+i,m+j}),\delta_{\lambda_i=\pm\lambda_j} (e^{i,m+j} +\lambda_i\lambda_j^{-1}e^{j,m+i})\mid 1\le i\le j\le m\},\\
&&A_2=\{\delta_{\lambda_i=\pm\kappa_j} (e^{m+i}\wedge\omega^j-\lambda_i\kappa_j^{-1}e^{i}\wedge \omega^{n+j}),\delta_{\lambda_i=\pm\kappa_j} (e^{i}\wedge\omega^j-\lambda_i\kappa_j^{-1}e^{m+i}\wedge \omega^{n+j})\mid \\
&&\qquad \ 1\le i\le m, 1\le j\le n\},\\
&&A_3=\{\delta_{\kappa_i=\pm\kappa_j} (\omega^{i,j}-\kappa_i\kappa_j^{-1} \omega^{n+i,n+j}),\delta_{\kappa_i=\pm\kappa_j} (\omega^{i,n+j}-\kappa_i\kappa_j^{-1} \omega^{j,n+i})\mid 1\le i\le j\le n\},\\
&&A_4=\{\eta^{i,i},\eta^{k,l}\mid 1\le i\le t-1, 1\le k<l\le t\},\\
&&A_5=\{e^{2m+2}\wedge\eta^{k}\mid 1\le k\le t\}.
\end{eqnarray*}
In particular, 
$\mathrm{sdim}\ H^2(\h^{\lambda,\kappa,\mu}_{m,n,t})=
(2\mathrm{Card}\{(i,j)\mid \lambda_i=\pm \lambda_j,1\le i<j\le m\}+2\mathrm{Card}\{(i,j)\mid \kappa_i=\pm \kappa_j,1\le i\le j\le n\}+\frac{t(t+1)}{2}+m-1,
2\mathrm{Card}\{(i,j)\mid \lambda_i=\pm \kappa_j,1\le i\le m, 1\le j\le n\}+t).$

\end{theorem}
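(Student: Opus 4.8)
The plan is to apply Lemma~\ref{SE} with $k=1$ and $k=2$, feeding in the explicit descriptions of $H^1(\h_{m,2n+t})$ and $H^2(\h_{m,2n+t})$ from Lemma~\ref{se}. The whole computation reduces to understanding how the operator $e_{2m+2}$ acts on these cohomology spaces, so first I would work out this action on cocycle representatives. Since $e_{2m+2}$ acts on the generators by $[e_{2m+2},e_i]=\lambda_i e_{m+i}$, $[e_{2m+2},e_{m+i}]=\lambda_i e_i$, $[e_{2m+2},\omega_j]=\kappa_j\omega_{n+j}$, $[e_{2m+2},\omega_{n+j}]=\kappa_j\omega_j$, the induced (contragredient) action on the dual basis vectors sends $e^i\mapsto\lambda_i e^{m+i}$, $e^{m+i}\mapsto\lambda_i e^i$, $\omega^j\mapsto\kappa_j\omega^{n+j}$, $\omega^{n+j}\mapsto\kappa_j\omega^j$ (up to signs I would fix carefully), and fixes the $\eta^k$ and $e^{2m+1}$. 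Extending this as a derivation to $\wedge^2$ gives the action on the degree-$2$ classes.

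For part~(1) I would compute $H^1(\h_{m,2n+t})^{e_{2m+2}}$ and the cokernel $H^0(\h_{m,2n+t})/e_{2m+2}\cdot H^0$. The invariants of the action on the span of $\{e^1,\dots,e^{2m}\mid\omega^1,\dots,\omega^{2n},\eta^1,\dots,\eta^t\}$ are exactly the $\eta^k$ (since $\lambda_i,\kappa_j\neq 0$, the two-dimensional blocks $\spn\{e^i,e^{m+i}\}$ and $\spn\{\omega^j,\omega^{n+j}\}$ carry a fixed-point-free action), and the $k=0$ contribution supplies $\F e^{2m+2}$; this yields precisely $\{e^{2m+2}\mid\eta^1,\dots,\eta^t\}$.

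For part~(2) the main work is diagonalizing the action of $e_{2m+2}$ on $H^2(\h_{m,2n+t})$ block by block, then extracting invariants ($r=0$, contributing to $H^2(\h_{m,2n+t})^{e_{2m+2}}$) and the relevant quotient of $H^1$ paired with $e^{2m+2}$ ($r=1$). On each invariant block, the eigenvalue-type analysis of a $2\times 2$ action governs when an invariant combination exists: for instance on $\spn\{e^{i,j},e^{m+i,m+j}\}$ the action has a fixed combination precisely when $\lambda_i=\pm\lambda_j$, which is the source of the $\delta_{\lambda_i=\pm\lambda_j}$ factors and the paired combinations recorded in $A_1,A_2,A_3$; the pure $\eta$-classes in $A_4$ are automatically invariant, and $A_5=\{e^{2m+2}\wedge\eta^k\}$ arises from the $r=1$ summand $\F e^{2m+2}\wedge (H^1/e_{2m+2}\cdot H^1)$, whose surviving representatives are the $\eta^k$ by the same computation as in part~(1). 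The hard part will be carefully tracking the $\pm$ signs and the distinction between $\lambda_i=\lambda_j$ and $\lambda_i=-\lambda_j$ (and likewise for $\kappa$) so that the invariant combinations are written with the correct coefficients $\pm\lambda_i\lambda_j^{-1}$, and making sure no spurious classes survive from the cokernel terms in low degree.

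Once the surviving basis is pinned down, the superdimension formula follows by bookkeeping: I would count the even classes (the $A_1,A_3,A_4$ contributions plus the $e^{2m+1}$-related terms giving the $m-1$ and $\frac{t(t+1)}{2}$ summands) and the odd classes (the $A_2$ and $A_5$ contributions giving $2\,\mathrm{Card}\{\dots\}+t$), using the cardinalities of the index sets where the $\delta$-conditions hold. I would double-check the ranges $1\le i\le j\le m$ versus $1\le i<j\le m$ to reconcile the diagonal $i=j$ terms (which can produce genuine invariant classes when $2\lambda_i\neq 0$, consistent with $p>2$) against the off-diagonal count appearing in the stated superdimension.
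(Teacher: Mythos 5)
Your proposal follows essentially the same route as the paper: apply Lemma~\ref{SE} for $k=1,2$, compute the contragredient action of $e_{2m+2}$ on the dual basis (the paper records $e_{2m+2}\cdot e^i=-\lambda_i e^{m+i}$, etc.), decompose $H^2(\h_{m,2n+t})$ into two-dimensional invariant blocks, and read off invariants (yielding $A_1$--$A_4$ with the $\delta_{\lambda_i=\pm\lambda_j}$ conditions) plus the $r=1$ summand $\F e^{2m+2}\wedge(H^1/e_{2m+2}\cdot H^1)$ (yielding $A_5$). The sign conventions and the diagonal $i=j$ bookkeeping you flag as remaining work are exactly the details the paper's block-by-block computation supplies, so the plan is sound and matches the published argument.
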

\begin{proof}
A direct computation shows that
\[e_{2m+2}\cdot e^i=-\lambda_ie^{m+i},\ 
e_{2m+2}\cdot e^{m+i}=-\lambda_ie^i,\
e_{2m+2}\cdot \omega^j=-\kappa_j\omega^{n+j},
\]
\[
 e_{2m+2}\cdot \omega^{n+j}=-\kappa_j\omega^{j},\ \
e_{2m+2}\cdot \eta^k=0,
\]
where $1\le i\le m$, $1\le j\le n$ and $1\le k\le t$.
 By Lemmas \ref{SE} and \ref{se}, we have
\begin{equation*}
H^1(\h^{\lambda,\kappa,\mu}_{m,n,t}) =H^1(\h_{m,2n+t})^{e_{2m+2}}\oplus \F e^{2m+2}=\mathrm{span}\{ e^{2m+2}\mid \eta^1,\ldots,\eta^t\}.
\end{equation*}
By Lemma \ref{SE}, we have
\begin{equation}\label{1}
H^2(\h^{\lambda,\kappa,\mu}_{m,n,t}) =H^2(\h_{m,2n+t})^{e_{2m+2}}\oplus \left(\F e^{2m+2}\wedge \frac{H^{1}(\h_{m,2n+t})}{e_{2m+2}\cdot H^{1}(\h_{m,2n+t})}\right).
\end{equation}
Note that
$e_{2m+2}\cdot H^1(\h_{m,2n+t})=\mathrm{span}\{e^1,\ldots,e^{2m}\mid\omega^1,\ldots,\omega^{2n}\}$. We have
\begin{equation}\label{2}
\frac{H^{1}(\h_{m,2n+t})}{e_{2m+2}\cdot H^{1}(\h_{m,2n+t})}=\mathrm{span}\{\eta^1,\ldots,\eta^t\}.
\end{equation}
To compute
$H^2(\h_{m,2n+t})^{e_{2m+2}}$,
we denote by the following subspaces of $H^2(\h_{m,2n+t})$:
\begin{eqnarray*}
&&V_1=\mathrm{span}\{e^{i,j}, e^{m+i,m+j}\mid 1\le i<j\le m\}, \\
&&V_2=\mathrm{span}\{e^{i,m+j}, e^{j,m+i}\mid 1\le i\le j\le m\},\\
&&V_3=\mathrm{span}\{e^i\wedge \omega^{n+j}, e^{m+i}\wedge \omega^j\mid 1\le i\le m, 1\le j\le n\},\\
&&V_4=\mathrm{span}\{e^i\wedge \omega^j, e^{m+i}\wedge \omega^{n+j}\mid 1\le i\le m, 1\le j\le n\},\\
&&V_5=\mathrm{span}\{\omega^{i,j}, \omega^{n+i,n+j}\mid 1\le i\le j\le n\},\\
&&V_6=\mathrm{span}\{\omega^{i,n+j}, \omega^{j,n+i}\mid 1\le i\le j\le n\},\\
&&V_7=\mathrm{span}\{\eta^{i,i},\eta^{k,l}\mid 1\le i\le t-1, 1\le k<l\le t\},\\
&&V_8=\mathrm{span}\{e^{i}\wedge \eta^k, \omega^j\wedge \eta^k \mid 1\le i\le 2m, 1\le  j\le 2n, 1\le  k\le t\}.
\end{eqnarray*}
Then it makes a decomposition of the space $H^2(\h_{m,2n+t})$, which induces a decomposition of $H^2(\h_{m,2n+t})^{e_{2m+2}}$, that is, 
\[H^2(\h_{m,2n+t})^{e_{2m+2}}=\left(\bigoplus_{i=1}^{8}V_i\right)^{e_{2m+2}}=\bigoplus_{i=1}^{8} V_i^{e_{2m+2}}.\]
Moreover, by a direct computation, we get
\begin{eqnarray*}
&&V_1^{e_{2m+2}}=\mathrm{span}\{\delta_{\lambda_i=\pm\lambda_j} (e^{i,j} -\lambda_i\lambda_j^{-1}e^{m+i,m+j})\mid 1\le i<j\le m\}, \\
&&V_2^{e_{2m+2}}=\mathrm{span}\{\delta_{\lambda_i=\pm\lambda_j} (e^{i,m+j} +\lambda_i\lambda_j^{-1}e^{j,m+i})\mid 1\le i\le j\le m\}, \\
&&V_3^{e_{2m+2}}=\mathrm{span}\{\delta_{\lambda_i=\pm\kappa_j} (e^{m+i}\wedge\omega^j-\lambda_i\kappa_j^{-1}e^{i}\wedge \omega^{n+j})\mid 1\le i\le m, 1\le j\le n\}, \\
&&V_4^{e_{2m+2}}=\mathrm{span}\{\delta_{\lambda_i=\pm\kappa_j} (e^{i}\wedge\omega^j-\lambda_i\kappa_j^{-1}e^{m+i}\wedge \omega^{n+j})\mid 1\le i\le m, 1\le j\le n\}, \\
&&V_5^{e_{2m+2}}=\mathrm{span}\{\delta_{\kappa_i=\pm\kappa_j} (\omega^{i,j}-\kappa_i\kappa_j^{-1} \omega^{n+i,n+j})\mid 1\le i\le j\le n\}, \\
&&V_6^{e_{2m+2}}=\mathrm{span}\{\delta_{\kappa_i=\pm\kappa_j} (\omega^{i,n+j}-\kappa_i\kappa_j^{-1} \omega^{j,n+i})\mid 1\le i\le j\le n\}, \\
&&V_7^{e_{2m+2}}=V_7,\\
&&V_8^{e_{2m+2}}=0.
\end{eqnarray*}
Then we get
$H^2(\h_{m,2n+t})^{e_{2m+2}}$ is spanned by the set $\bigcup_{i=1}^4 A_i$.
The proof follows from Eqs. (\ref{1}) and (\ref{2}).
\end{proof}

\section{The cohomology $H_*^1(\h^{\lambda,\kappa,\mu}_{m,n,t})$ and $H_*^2(\h^{\lambda,\kappa,\mu}_{m,n,t})$}

 The
$[p]$-operator formula (\ref{res})  implies
\[
[\h^{\lambda,\kappa,\mu}_{m,n,t},\h^{\lambda,\kappa,\mu}_{m,n,t}]
+
\langle (\h^{\lambda,\kappa,\mu}_{m,n,t})_{\bar{0}}^{[p]}\rangle=\mathrm{span}\{e_1,\ldots,e_{2m+2}\mid \omega_1,\ldots,\omega_{2n}\}.
\]
 Then Eq. (\ref{h1})
follows that 
$H_*^1(\h^{\lambda,\kappa,\mu}_{m,n,t})=\mathrm{span}\{\eta^1,\ldots,\eta^t\}$. To compute the cohomology $H_*^2(\h^{\lambda,\kappa,\mu}_{m,n,t})$, we recall the Hochschild's six-term exact sequence for a restricted Lie superalgebra
$\g$, see \cite{H,Viv} for more details, which
relates the ordinary and restricted 1- and 2-cohomology spaces:
\begin{equation}\label{sixterm}
\begin{split} 
0&\longrightarrow H^1_*(\g)\stackrel{\iota}{\longrightarrow}  H^1(\g)\stackrel{D}{\longrightarrow}
\Hom_{\rm Fr}(\g_{\bar{0}},\F) \longrightarrow
H^2_*(\g)\stackrel{\pi}{\longrightarrow }H^2(\g)\longrightarrow\\
&\stackrel{H}{\longrightarrow} \Hom_{\rm
    Fr}(\g_{\bar{0}},H^1(\g)).
\end{split}
\end{equation}
The maps
$D: H^1(\g)\to\Hom_{\rm Fr}(\g_{\bar{0}},\F) $
and
$H: H^2(\g)\to\Hom_{\rm Fr}(\g_{\bar{0}},H^1(\g))$
in
(\ref{sixterm}) are  given by

\begin{equation}\label{D}
D_\psi(g)= \psi(g^{[p]})
  \end{equation}
and
\begin{equation}\label{HH}
H_\phi(g)\cdot h =\varphi(g\wedge (\ad g)^{p-1}(h))-\phi(g^{[p]}\wedge h)
  \end{equation}
where $g\in \g_{\bar{0}}$ and $h\in\g$. For $\g=\h^{\lambda,\kappa,\mu}_{m,n,t}$ and $\varphi\in \mathrm{ker}(H)$, by Theorem \ref{H2} (2), we can let $\varphi=\varphi_1+\sum_{k=1}^t a_k e^{2m+2}\wedge \eta^k$, where $\varphi_1\in\langle \bigcup_{i=1}^4 A_i\rangle$ and $a_k\in\F$. Then, for $1\le k\le t$, we have 
\begin{eqnarray*}
H_{\varphi}(e_{2m+2})\cdot    \eta_k=-\varphi(e_{2m+2}^{[p]}\wedge \eta_k)&=&-\varphi((|\lambda|e_{2m+2}+\mu_{2m+2} e_{2m+1})\wedge \eta_k),\\
&=&-\sum_{k=1}^t a_k (e^{2m+2}\wedge \eta^k)(|\lambda|e_{2m+2}\wedge \eta_k),\\
&=&0.
\end{eqnarray*}
From this, we get all $a_k=0$. So $\varphi=\varphi_1\in\langle \bigcup_{i=1}^4 A_i\rangle$. In contrast, for
$\varphi_1\in\langle \bigcup_{i=1}^4 A_i\rangle$ and $g=\sum^{2m+2}_{i=1} a_i e_i\in (\h^{\lambda,\kappa,\mu}_{m,n,t})_{\bar{0}}$,  we have
\[
H_{\varphi_1}(g)\cdot e_{2m+2}=\varphi_1(g\wedge (\ad g)^{p-1}(e_{2m+2}))=-a_{2m+2}^{p-2}|\lambda|\varphi_1(g,g)=0.
\]
Therefore, by Theorem \ref{H2} (1), $\mathrm{ker}(H)=\langle \bigcup_{i=1}^4 A_i\rangle$.
Note that $D(\eta^1)=\cdots=D(\eta^t)=0$, $D(e^{2m+2})=|\lambda|\overline e^{2m+2}$.
Then, by Theorem \ref{H2} (1), $\mathrm{im}(D)=\F \overline e^{2m+2}$.
Moreover, the sequence (\ref{sixterm}) reduces to the splitting exact sequence
\begin{equation}\label{Ex}
\begin{split} 
0&\longrightarrow
\Hom_{\rm Fr}((\h^{\lambda,\kappa,\mu}_{m,n,t})_{\bar{0}},\F)/ \F \overline e^{2m+2}\longrightarrow
H^2_*(\h^{\lambda,\kappa,\mu}_{m,n,t})\longrightarrow \left\langle \bigcup_{i=1}^4 A_i\right\rangle \longrightarrow 0,
\end{split}
\end{equation}
where the map $\Hom_{\rm Fr}((\h^{\lambda,\kappa,\mu}_{m,n,t})_{\bar{0}},\F)/\F \overline e^{2m+2}
\rightarrow H^2_*(\h^{\lambda,\kappa,\mu}_{m,n,t})$
sends $\overline e^i$ to the class of
$(0,\overline e^i)$  for $1 \le i \le 2m+1$.

\begin{theorem}\label{H*2}
Suppose that $\h^{\lambda,\kappa,\mu}_{m,n,t}$ is the restricted twisted Heisenberg Lie superalgebra spanned by $\{e_1,\ldots,e_{2m+2}\mid \omega_1,\ldots,\omega_{2n},\eta_1,\ldots,\eta_t\}$. Then
 the space $H^2_*(\h^{\lambda,\kappa,\mu}_{m,n,t})$ is spanned by  the classes of the cocycles
\[
\left\{(\varphi,\widetilde{\varphi})
   \mid \varphi\in\bigcup_{i=1}^4 A_i\right\}\cup
\{(0,\overline e^i)\mid 1\le i\le 2m+1\},
\]
where $A_i$ is defined in Theorem \ref{H2} (2) for $1\le i \le 4$. In particular, 
$\mathrm{sdim}\ H^2(\h^{\lambda,\kappa,\mu}_{m,n,t})=
(2\mathrm{Card}\{(i,j)\mid \lambda_i=\pm \lambda_j,1\le i<j\le m\}+2\mathrm{Card}\{(i,j)\mid \kappa_i=\pm \kappa_j,1\le i\le j\le n\}+\frac{t(t+1)}{2}+3m,
2\mathrm{Card}\{(i,j)\mid \lambda_i=\pm \kappa_j,1\le i\le m, 1\le j\le n\}).$
\end{theorem}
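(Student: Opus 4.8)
The plan is to read the entire statement off the split short exact sequence (\ref{Ex}) that has already been distilled from Hochschild's six-term sequence, so that no further spectral-sequence machinery or cocycle bookkeeping is needed beyond identifying the two outer terms of (\ref{Ex}) by explicit bases. Since (\ref{Ex}) is split exact, $H^2_*(\h^{\lambda,\kappa,\mu}_{m,n,t})$ is the direct sum of $\ker\pi$ and the image of a chosen section of the surjection onto $\langle\bigcup_{i=1}^4 A_i\rangle$; it therefore suffices to exhibit a basis of each summand and then add up dimensions.

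For $\ker\pi$ I would invoke exactness at $H^2_*$: $\ker\pi$ is the image of the injection $\Hom_{\rm Fr}((\h^{\lambda,\kappa,\mu}_{m,n,t})_{\bar{0}},\F)/\F\,\overline e^{2m+2}\to H^2_*$. Because $(\h^{\lambda,\kappa,\mu}_{m,n,t})_{\bar{0}}$ is spanned by $e_1,\dots,e_{2m+2}$, the Frobenius homomorphisms $\overline e^1,\dots,\overline e^{2m+2}$ form a basis of $\Hom_{\rm Fr}$, and quotienting by $\F\,\overline e^{2m+2}$ leaves $\{\overline e^i\mid 1\le i\le 2m+1\}$ as a basis of the quotient. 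By the explicit description of the injection in (\ref{Ex}), these map to the classes of $(0,\overline e^i)$, which hence form a basis of $\ker\pi$.

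For a section of $\pi$ I would use the map $\varphi\mapsto(\varphi,\widetilde\varphi)$, where $\widetilde\varphi$ is the unique $\varphi$-compatible map with $\widetilde\varphi(e_i)=0$ for all $i$. The point to verify is that $(\varphi,\widetilde\varphi)$ is a genuine restricted $2$-cocycle for every $\varphi\in\bigcup_{i=1}^4A_i$. Since $\langle\bigcup_{i=1}^4A_i\rangle=\ker H=\im\pi$ by exactness, the class $[\varphi]$ lifts to $H^2_*$; after subtracting a suitable restricted coboundary $d^1_*(\chi)$ I may arrange the first component to be exactly $\varphi$, obtaining a restricted cocycle $(\varphi,\omega)\in\ker d^2_*$. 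Then $d^2\varphi=0$ and $\ind^2(\varphi,\omega)=0$, and because $\ind^2(\varphi,\cdot)$ does not depend on the chosen $\varphi$-compatible map, $\ind^2(\varphi,\widetilde\varphi)=0$ as well; hence $d^2_*(\varphi,\widetilde\varphi)=(d^2\varphi,\ind^2(\varphi,\widetilde\varphi))=0$. Linearity of $\varphi\mapsto\widetilde\varphi$ then makes $\varphi\mapsto[(\varphi,\widetilde\varphi)]$ a well-defined linear section of $\pi$, whose image has basis $\{(\varphi,\widetilde\varphi)\mid\varphi\in\bigcup_{i=1}^4A_i\}$. Combining the two summands yields the asserted spanning set.

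The super-dimension count is then routine: by Theorem \ref{H2}(2) the classes $\bigcup_{i=1}^4A_i$ are precisely $H^2$ with the odd block $A_5$ (of $t$ elements) removed, while the $2m+1$ classes $(0,\overline e^i)$ are all even; adding $2m+1$ to the even dimension of $H^2$ and subtracting $t$ from its odd dimension gives the stated $\mathrm{sdim}$. I expect the only genuinely delicate step to be the verification in the previous paragraph that $\varphi\mapsto(\varphi,\widetilde\varphi)$ is a splitting at the cochain level, i.e. that $(\varphi,\widetilde\varphi)\in\ker d^2_*$; this rests squarely on the independence of $\ind^2(\varphi,\cdot)$ from the compatible map chosen, together with the already-established identity $\im\pi=\ker H$. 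Everything else amounts to reading off exactness and counting dimensions.
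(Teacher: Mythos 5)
Your proposal is correct and follows essentially the same route as the paper: both read the answer off the split exact sequence (\ref{Ex}), identifying the kernel of $\pi$ with the classes $(0,\overline e^i)$, $1\le i\le 2m+1$, and splitting the surjection onto $\ker H=\langle\bigcup_{i=1}^4 A_i\rangle$ via $\varphi\mapsto(\varphi,\widetilde\varphi)$. You supply more detail than the paper's two-line proof (in particular the verification, via the independence of $\ind^2(\varphi,\cdot)$ from the choice of compatible map, that $(\varphi,\widetilde\varphi)$ is a restricted cocycle), and you correctly use $\langle\bigcup_{i=1}^4 A_i\rangle$ rather than all of $H^2$ as the third term, which the paper's displayed isomorphism states imprecisely.
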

\begin{proof}
The exact sequence (\ref{Ex}) follows that
 \[
 H^2_*(\h^{\lambda,\kappa,\mu}_{m,n,t})\cong H^2(\h^{\lambda,\kappa,\mu}_{m,n,t})\oplus\Hom_{\rm Fr}((\h^{\lambda,\kappa,\mu}_{m,n,t})_{\bar{0}},\F)/\F \overline e^{2m+2}.
 \]
The proof follows from Theorem \ref{H2} (2).
\end{proof}

\section{Restricted one-dimensional central extensions of
  $\h^{\lambda,\kappa,\mu}_{m,n,t}$}

The definition of restricted central
extensions was introduced in \cite{BE}.
Let $(\g,[p])$
be a restricted Lie superalgebra and $M$ be a strongly abelian restricted Lie superalgebra (i.e., $[M, M]=0$ and $M_{\bar{0}}^{[p]}=0$). 
A \emph{restricted extension} of $\g$ by $M$ is a short exact sequence of restricted Lie superalgebras
\begin{equation*}
  0 \longrightarrow   M \stackrel{\iota} {\longrightarrow}\mathfrak{G} \stackrel{\pi}{\longrightarrow}  \g  \longrightarrow  0.
\end{equation*}
If $\iota(M)$ is contained in the center of $\mathfrak{G}$, then the extension is called \emph{central}. Two restricted central extensions
of $\g$ by $M$ are called \emph{equivalent} if  there exists a restricted Lie superalgebra homomorphism $\sigma: \mathfrak{G}_1\to \mathfrak{G}_2$ such that $\pi_2 \sigma=\pi_1$. 
Consider the 
restricted central extensions
of $\g$ by a one-dimensional space $\F c$.
If $(\phi,\omega) \in C^2_*(\g)_{\bar{0}}$ is a
restricted even 2-cocycle, then the
corresponding restricted
one-dimensional central extension $\mathfrak{G}=\g\oplus\F c$ has  the bracket and
$[p]$-operator defined for all $g,h\in\g$ and $g_0\in\g_{\bar{0}}$ by
\begin{align}\label{genonedimext}
  \begin{split}
  [g,h]_{\mathfrak{G}}&=[g,h]+ \phi(g\wedge h)c\\  g_0^{[p]_{\mathfrak{G}}}&=g_0^{[p]} + \omega(g_{0})c
  \end{split}
\end{align}
where $[\cdot,\cdot]$ and $[p]$ denote the 
bracket of $\g$ and $[p]$-operator of $\g$, respectively. Moreover, we have the following theorem.

\begin{theorem}\cite[Theorem 3.5.3]{BE}\label{ce}
Let $(\g,[p])$ be a restricted Lie superalgebra. Then $H_*^{2}(\g)_{\bar{0}}$ is in one to one correspondence with the equivalence classes of restricted one-dimensional central extensions of $\g$.
\end{theorem}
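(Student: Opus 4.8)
The plan is to exhibit mutually inverse constructions between even restricted $2$-cocycles and one-dimensional restricted central extensions, and then to check that both constructions descend to the respective equivalence relations. Throughout, $c$ is taken to be even, which is why only the even part $C^2_*(\g)_{\bar 0}$ (and hence $H^2_*(\g)_{\bar 0}$) enters.

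First, starting from a restricted even $2$-cocycle $(\phi,\omega)\in C^2_*(\g)_{\bar 0}$, I would equip $\mathfrak{G}=\g\oplus\F c$ with the bracket and $[p]$-operator of (\ref{genonedimext}), extended by $[c,\cdot]_{\mathfrak{G}}=0$ and $c^{[p]_{\mathfrak{G}}}=0$, and verify that $\mathfrak{G}$ is a restricted Lie superalgebra with $\F c$ a strongly abelian central ideal. Super-antisymmetry of $[\cdot,\cdot]_{\mathfrak G}$ and the super-Jacobi identity follow respectively from $\phi\in\wedge^2\g^*$ and from $d^2(\phi)=0$. For the $[p]$-operator, axiom (1) is immediate from $\omega(ag_0)=a^p\omega(g_0)$, while axiom (2) is the heart of the matter: one computes $(g_0+h_0)^{[p]_{\mathfrak G}}$, extracts its $\F c$-component, and checks that the correction terms $\sum_i s_i$ produced inside $\mathfrak G$ match $\omega(g_0+h_0)-\omega(g_0)-\omega(h_0)$, which is exactly the content of the $\phi$-compatibility relation (\ref{starprop}). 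Finally, since $c$ is central, iterating gives $(\ad_{\mathfrak G}g_0)^p(h)=(\ad g_0)^p(h)+\phi(g_0\wedge(\ad g_0)^{p-1}(h))c$, whereas $\ad_{\mathfrak G}(g_0^{[p]_{\mathfrak G}})(h)=[g_0^{[p]},h]+\phi(g_0^{[p]}\wedge h)c$; so axiom (3) reduces, via $(\ad g_0)^p=\ad g_0^{[p]}$ in $\g$, to $\phi(g_0\wedge(\ad g_0)^{p-1}(h))=\phi(g_0^{[p]}\wedge h)$, which is precisely the vanishing of $\ind^2(\phi,\omega)$ supplied by the cocycle condition $d^2_*(\phi,\omega)=0$.

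Conversely, given a restricted one-dimensional central extension, I would choose a parity-preserving linear section $s:\g\to\mathfrak G$ and define $\phi$ and $\omega$ as the obstructions to $s$ being a restricted homomorphism, via $\phi(g\wedge h)c=[s(g),s(h)]_{\mathfrak G}-s([g,h])$ and $\omega(g_0)c=s(g_0)^{[p]_{\mathfrak G}}-s(g_0^{[p]})$; these are $\F c$-valued because $s$ is a section. Running the computations of the previous paragraph in reverse shows that $(\phi,\omega)\in C^2_*(\g)_{\bar 0}$ and that $d^2_*(\phi,\omega)=0$, and the two constructions are visibly inverse once a section is fixed. It then remains to pass to classes on both sides. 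Changing the section by $\beta:\g\to\F c$, equivalently by $\psi:=\beta/c\in C^1_*(\g)$, alters $(\phi,\omega)$ by $(d^1(\psi),\ind^1(\psi))=d^1_*(\psi)$, so distinct sections of one extension give cohomologous cocycles; conversely a coboundary $d^1_*(\psi)$ is realized by the isomorphism $\sigma:g+ac\mapsto g+(a+\psi(g))c$, which satisfies $\pi'\sigma=\pi$ and intertwines both the brackets and the $[p]$-operators, hence is an equivalence of the associated extensions. A parallel bookkeeping shows that any equivalence $\sigma:\mathfrak G_1\to\mathfrak G_2$ transports a section of $\mathfrak G_2$ to one of $\mathfrak G_1$ differing by an element of $\g^*$, so equivalent extensions yield cohomologous cocycles.

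Assembling these four verifications produces the bijection asserted in Theorem \ref{ce}. I expect the genuinely delicate step to be axiom (2): matching the Jacobson polynomials $s_i(g_0,h_0)$ computed inside $\mathfrak G$ with the symmetric sum in (\ref{starprop}) requires tracking how each $s_i$ contributes to the $\F c$-component, together with careful attention to super-signs to guarantee that one stays within the even part $C^2_*(\g)_{\bar 0}$ at every stage.
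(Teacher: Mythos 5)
The paper does not actually prove this statement: it is quoted verbatim from Bouarroudj--Ehret \cite[Theorem 3.5.3]{BE}, so there is no internal proof to compare against. Your sketch is the standard argument that the cited source carries out -- the two mutually inverse constructions, the identification of $[p]$-axiom (2) with $\phi$-compatibility (\ref{starprop}), of axiom (3) (and the restricted-module condition on $\g_{\bar 1}$) with the vanishing of $\ind^2(\phi,\omega)$, and the matching of section changes with coboundaries $d^1_*(\psi)$ -- and you correctly flag the Jacobson-polynomial computation behind axiom (2) as the only step requiring genuine work. The one point worth making explicit is that the section change $\psi$, and hence the resulting coboundary, must itself be even for your $\sigma$ to be parity-preserving; this is consistent with taking the quotient by $\im(d^1_*)_{\bar 0}$ when forming $H^2_*(\g)_{\bar 0}$, but as written your last paragraph allows an arbitrary element of $\g^*$.
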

Notice that different from Lie algebras [\cite{F}, Chapter 1, Section 4.6] and restricted Lie algebras [\cite{H}, Theorem 3.3], only even cocycles can be used to construct restricted central extensions of restricted Lie superalgebras.

With the equations (\ref{genonedimext}) together with
Theorem~\ref{H*2} we can explicitly describe the restricted
one-dimensional central extensions of $\h^{\lambda,\kappa,\mu}_{m,n,t}$. 
Let $g=\sum a_ie_i+\sum b_i\omega_i+\sum c_i\eta_i$, $h=\sum a_i' e_i+ \sum b'_i\omega_i+\sum c'_i\eta_i$ and $g_{0}=\sum d_ie_i$ denote  arbitrary elements of
$\h^{\lambda,\kappa,\mu}_{m,n,t}$ and $(\h^{\lambda,\kappa,\mu}_{m,n,t})_{\bar{0}}$, respectively. For $1\leq k<l\leq 2m$, Eq. (\ref{starprop}) gives
\begin{align}\label{p}
\begin{split}
  \widetilde{e^{k,l}}
  (g_0)&= \widetilde{e^{k,l}}  \left(\sum_{i=1}^{2 m+1}d_i e_i+d_{2 m+2}e_{2 m+2}\right)\\
  &=2^{-1}d_{2 m+2}^{p-2}\sum_{i,j=1}^{2 m}d_id_j e^{k,l}([e_i,\underbrace{e_{2 m+2},\ldots,e_{2 m+2}}_{p-2}],e_j)\\
        &= -2^{-1}d_{2m+2}^{p-2}    \sum_{i=1}^{m}\sum_{j=1}^{2 m}\lambda_{i}^{p-2}(d_i d_j
       e^{k,l}(e_{m+i},e_j) \\
       &+d_{m+i}d_j e^{k,l}(e_{i},e_j)).
 \end{split}
\end{align}
Since $g_0\in(\h^{\lambda,\kappa,\mu}_{m,n,t})_{\bar{0}}$, Eq. (\ref{starprop}) implies $\widetilde{\omega^{i,j}}(g_0)=\widetilde{\eta^{s,t}}(g_0)=0$ for $1\le i\le j\le 2 n$ and $1\le k\le l\le t$.

If $1\le i<j\le m$ such that $\lambda_i=\lambda_j$ or $\lambda_i=-\lambda_j$, and $\mathfrak{G}_{i,j}=\h^{\lambda,\kappa,\mu}_{m,n,t}\oplus \F c$ denotes
the one-dimensional restricted central extension of $\h^{\lambda,\kappa,\mu}_{m,n,t}$
determined by the cohomology class of the restricted cocycle
$$ \left(e^{i,j}-\lambda_i\lambda^{-1}_je^{m+i,m+j},
  \widetilde{e^{i,j}}-\lambda_i\lambda^{-1}_j\widetilde{e^{m+i,m+j}}\right),$$
   then (\ref{genonedimext}) and (\ref{p}) give the bracket and
$[p]$-operator in $\mathfrak{G}_{i,j}$:
\begin{align*}
  \begin{split}
    [g,h]_{\mathfrak{H}_{i,j}}& =[g,h]+(a_i a'_j-a_ja'_i-\lambda_i\lambda^{-1}_ja_{m+i}a'_{m+j}+\lambda_i\lambda^{-1}_ja_{m+j}a'_{m+i})c;\\
    g_0^{[p]_{\mathfrak{H}_{i,j}}} & = g_{0}^{[p]}-2^{-1}d_{2 m+2}^{p-2}(\lambda^{p-2}_{i} d_{m+i}d_{j}-2\lambda_{j}^{p-2}d_{m+j}d_{i}
    +\lambda_i\lambda_j^{p-3}d_jd_{m+i}) c.
  \end{split}
\end{align*}

If $1\le i\le j\le m$ such that $\lambda_i=\lambda_j$ or $\lambda_i=-\lambda_j$,  and $\mathfrak{G}_{i,m+j}=
\h^{\lambda,\kappa,\mu}_{m,n,t}
\oplus \F c$ denotes
the one-dimensional restricted central extension of
$\h^{\lambda,\kappa,\mu}_{m,n,t}$
determined by the cohomology class of the restricted cocycle
$$(e^{i,m+j}+\lambda_i \lambda^{-1}_je^{j,m+i}, \widetilde{ e^{i,m+j}}+\lambda_i\lambda^{-1}_j\widetilde{e^{j,m+i}}),$$
then Eqs. (\ref{genonedimext}) and (\ref{p}) give the bracket and
$[p]$-operator in $\mathfrak{G}_{i,m+j}$:
\begin{align*}
  \begin{split}
    [g,h]_{\mathfrak{H}_{i,m+j}} & =[g,h]+(a_i a'_{m+j}-a_{m+j}a'_i-\lambda_i\lambda^{-1}_ja_{m+i}a'_{j}+\lambda_i\lambda^{-1}_ja_{j}a'_{m+i})c;\\
      g_0^{[p]_{\mathfrak{H}_{i,m+j}}} & = g_0^{[p]}-2^{-1}d_{2m+2}^{p-2}(\lambda^{p-2}_{i} d_{m+i}d_{m+j}-\lambda_{j}^{p-2}d_{j}d_{i}-\lambda_j^{p-2}d_i d_{j}\\
    &+\lambda_i\lambda_j^{p-3}d_{m+j}d_{m+i}) c.
  \end{split}
\end{align*}

If $1\le i\le j\le n$ such that $\kappa_i=\kappa_j$ or $\kappa_i=-\kappa_j$,  and $\mathfrak{H}_{i,j}=\h^{\lambda,\kappa,\mu}_{m,n,t}\oplus \F c$ denotes
the one-dimensional restricted central extension of $\h^{\lambda,\kappa,\mu}_{m,n,t}$
determined by the cohomology class of the restricted cocycle
$$(\omega^{i,j}-\kappa_i\kappa_j^{-1} \omega^{n+i,n+j}, \widetilde{\omega^{i,j}}-\kappa_i\kappa_j^{-1} \widetilde{\omega^{n+i,n+j}}),$$
then Eq. (\ref{genonedimext})  gives the bracket and
$[p]$-operator in $\mathfrak{H}_{i,m+j}$:
\begin{align*}
  \begin{split}
    [g,h]_{\mathfrak{H}_{i,n+j}} & =[g,h]-(b_ib'_j+b_j b'_i-\kappa_i\kappa^{-1}_j b_{n+i}b'_{n+j}-\kappa_i\kappa^{-1}_j b_{n+j}b'_{n+i})c;\\      g_0^{[p]_{\mathfrak{H}_{i,n+j}}} & = g_0^{[p]}.
  \end{split}
\end{align*}

If $1\le i\le j\le n$ such that $\kappa_i=\kappa_j$ or $\kappa_i=-\kappa_j$,  and $\mathfrak{H}_{i,n+j}=\h_m^{\lambda.\mu}\oplus \F c$ denotes
the one-dimensional restricted central extension of $\h^{\lambda,\kappa,\mu}_{m,n,t}$
determined by the cohomology class of the restricted cocycle
$$(\omega^{i,n+j}-\kappa_i\kappa_j^{-1} \omega^{j,n+i}, \widetilde{\omega^{i,n+j}}-\kappa_i\kappa_j^{-1} \widetilde{\omega^{j,n+i}}),$$
then Eq. (\ref{genonedimext}) gives the bracket and
$[p]$-operator in $\mathfrak{H}_{i,m+j}$:
\begin{align*}
  \begin{split}
    [g,h]_{\mathfrak{H}_{i,n+j}} & =[g,h]-(b'_ib_{n+j}+b'_{n+j}b_i-\kappa_i\kappa^{-1}_j b'_{n+i}b_{j}-\kappa_i\kappa^{-1}_j b'_{j}b_{n+i})c;\\
      g_0^{[p]_{\mathfrak{H}_{i,m+j}}} & = g_0^{[p]}.
  \end{split}
\end{align*}

If $1\le k\le l\le t$ and $\mathfrak{J}_{s,t}=\h^{\lambda,\kappa,\mu}_{m,n,t}\oplus \F c$ denotes
the one-dimensional restricted central extension of $\h^{\lambda,\kappa,\mu}_{m,n,t}$
determined by the cohomology class of the restricted cocycle
$(\eta^{k,l},\widetilde{\eta^{k,l}})$, then Eq. (\ref{genonedimext}) gives the bracket  and
$[p]$-operator 
\begin{align*}
  \begin{split}
    [g,h]_{\mathfrak{J}_{s,t}} & =[g,h]-(c'_kc_l+c'_l c_k)c;\\
    g_0^{[p]_{\mathfrak{J}_{s,t}}} & = g^{[p]}_0.
  \end{split}
\end{align*}

If $1\le i\le 2m+1$ and $\mathfrak{G}_i=\h^{\lambda,\kappa,\mu}_{m,n,t}\oplus \F c$ denotes
the one-dimensional restricted central extension of $\h^{\lambda,\kappa,\mu}_{m,n,t}$
determined by the cohomology class of the restricted cocycle
$(0,\overline e^i)$, then Eq.  (\ref{genonedimext}) gives the bracket  and
$[p]$-operator 
\begin{align*}
  \begin{split}
    [g,h]_{\mathfrak{H}_i} & =[g,h];\\
    g_0^{[p]_{\mathfrak{H}_i}} & = g^{[p]}_{0}+ d_i^p c.
  \end{split}
\end{align*}
The central extensions $\mathfrak{G}_i$ form a basis for the $(2m+1)$-dimensional
space of restricted one-dimensional central extensions that split as
ordinary Lie superalgebra extensions (c.f. \cite{EFY,Y2}).
\\

\small\noindent \textbf{Acknowledgment}\\
The work is supported by the Talent Project of the Tianchi Doctoral Program in Xinjiang Uygur Autonomous Region (Grant No. 5105250184d) 
and Natural Science Foundation of Xinjiang Uygur Autonomous Region (Grant No. 2025D01C279).

\end{document}